\newcommand{\xwai}{\sum_{i=0}^p}
\newtheorem{lemma}{Lemma}
\newtheorem{theo}{Theorem}
\newtheorem{coro}{Corollary}
\title {Efficiency of maximum likelihood estimation for a multinomial distribution with known probability sums}
\author{Yo Sheena\thanks{Faculty of Data Science, Shiga University, Japan; Visiting Professor of the Institute of Statistical Mathematics, Japan. E-mail: yo-sheena@biwako.shiga-u.ac.jp. }}
\date{August, 2020}
\begin{document}
\maketitle

\begin{abstract}
\ \ For a multinomial distribution, suppose that we have prior knowledge on the sum of the probabilities of some categories. This allows us to construct a submodel in a full (i.e., no-restriction) model. Maximum likelihood estimation (MLE) under this submodel is expected to have better estimation efficiency than MLE under the full model. This article presents the asymptotic expansion of the risk of MLE with respect to Kullback--Leibler divergence for both the full model and the ``m-aggregation'' submodel. The result reveals that the second-order term ($n^{-2}$-order term) of the submodel is larger than that of the full model unless the submodel is constructed from ``solid'' prior knowledge. We also present some concrete cases that the use of the submodel can increase the risk. 
\end{abstract}
\noindent
MSC(2010) \textit{Subject Classification}: Primary 60F99; Secondary 62F12\\
\textit{Key words and phrases:} Kullback-Leibler divergence, asymptotic risk, asymptotic expansion, multinomial distribution
\section{Introduction}
\label{section:int}
\ \ As Eguchi \cite{Eguchi1} , \cite{Eguchi2} showed, the geometrical properties of the manifold of a parametric distribution model can be induced by a divergence on it. Hence, if we make an analysis for a statistical model that is characterized by some certain geometrical properties (especially flatness), it seems natural that we use the corresponding divergence. 

Let's consider this issue in view of the submodel.  Consider an estimation problem in a parametric model $\mathcal{M}$ with the parameter $\theta$. If we have prior knowledge on $\theta$, it gives a submodel, $\mathcal{M^*}$. We usually project an estimator (say $\hat{\theta}$) in $\mathcal{M}$ onto $\mathcal{M^*}$, and use the projected point as an alternative estimator, $\hat{\theta}^*$. Naturally we conjecture that $\hat{\theta}^*$ performs better than $\hat{\theta}$ because of the use of the prior knowledge. This is theoretically guaranteed under some conditions; For example, if we use Bregman divergence induced from a convex function $\Psi(\theta)$ (say $D_\Psi[ \cdot \, | \, \cdot]$) and $\mathcal{M^*}$ is the affine subspace with respect to the dual parameter $\eta$, then the generalized Pythagorean theorem (see Amari \cite{Amari4}) says 
\[
D_\Psi[\hat{\theta} \, | \, \theta^* ] = D_\Psi[\hat{\theta} \, | \, \hat{\theta}^*] + D_\Psi[\hat{\theta}^* \, | \, \theta^*],
\]
where $\theta^*$ is the parameter of the true distribution, and $\hat{\theta}^*$ is the projected point with respect to $D_\Psi[\hat{\theta} \, | \, \cdot]$, namely
\begin{equation}
\label{psi_projection}
\hat{\theta}^* =\displaystyle{\operatorname*{\mathrm{argmin}}_{\theta \in \mathcal{M^*}}} D_\Psi[ \hat{\theta} \, | \, \theta].
\end{equation}
Consequently the risk of $\hat{\theta}^*$ is smaller than that $\hat{\theta}$, i.e.
\begin{equation}
\label{ineq_risk}
E[D_\Psi[\hat{\theta} \, | \, \theta^* ] ] \geq E[D_\Psi[\hat{\theta}^* \, | \, \theta^*]],
\end{equation}
where the expectation is taken under the true distribution. The point is pairing of $D_\Psi[\cdot \,|\, \cdot]$ and the submodel that is affine with respect to its dual coordinate.

This paper investigates the risk of the submodel for the case the pair is mismatched. The framework of the paper is given as follows:
\begin{enumerate}
\item The object of the estimation is the finite dimensional discrete distribution, i.e. the multinomial distribution.
\item The prior knowledge is completely correct and is given as the sum of the probabilities of some bins (categories).
\item The estimation method is MLE and we use Kullback-Leibler divergence (K-L divergence) for the estimation evaluation.
\end{enumerate}
The statistical problems under this framework are often observed in the surveys where a prior survey is available. It is an important issue whether we should adjust the result of the present survey according to the prior survey. We show two concrete examples:
\\
\\
-- Household Survey --
\\
\ \  From a household sample survey, we have formulated a two-way contingency table by categorizing each household in view of the yearly income level  and the age group of the main earner.  The relative frequency of each category to the sample size gives us MLE estimation of the corresponding two-dimensional discrete distribution. Suppose that a national census was carried out in the previous year,  then we have the highly reliable knowledge on the distribution of the age group. This means we have the prior knowledge on the sum of the probabilities for each row (or column) in the two-way table. The natural question arises whether we should modify the survey result so that the row (column) sums are equal to those of the census. 
\\
\\
--  Land Survey --
\\
\ \  We are interested in the state of land in a certain district. Land is categorized into three kinds of areas: non-forest land area, commercial forest land area, noncommercial forest land area. We will make a field investigation from randomly chosen points and make an estimation of the ratio among the above three areas. Prior to investigation, we already have obtained an aerial photo of the district, which informs us about the ratio between the forest and non-forest land areas. Hence, we have the prior information on the probability of one category (non-forest area) of the multinomial distribution. The same question as above arises.
\\

Now we formulate the estimation problem. First we consider the multinomial distribution as follows; 
In each independent trial, the random variable $X$ takes a value which belongs to one of the categories $C_i\ (i=0,1,\ldots,p)$ with the probability
\begin{equation}
\label{full_model}
m\triangleq (m_0, \ldots, m_p),\qquad m_i \triangleq P(X \in C_i)>0, \quad i=0,1,\ldots,p.
\end{equation}
Due to the restriction
\begin{equation}
\label{prob_cond}
\xwai m_i =1,
\end{equation}
the dimension of this probability model is equal to $p$. The model with no other  restrictions than \eqref{full_model} will be called "full model." 

Suppose we have prior information on the sum of the probabilities over some categories, namely
\begin{equation}
\label{info_sum}
m_{i_1} + \cdots +m_{i_s} = c, \qquad  0 \leq i_1 <  i_2 < \ldots < i_s  \leq p
\end{equation}
with some known constant $c$. If we have a collection of such restriction equations as \eqref{info_sum}, it formulates a submodel (hereafter we call it as "$m$-aggregation submodel").
If each parameter $m_i$ appears no more than once in all restriction equations, we call the model ``non-overlapping.'' 
If some $m_i$'s appear multiple times in the restriction, the model is referred to as ``overlapping.'' 
For example, if we only know the column sums of a two-way contingency table, the submodel is non-overlapping. 
If both row- and column-wise sums are known, an overlapping submodel is formulated.
Especially if we know the probability of one category, that is, $s=1$ in \eqref{info_sum}, we call the restriction (or information) is ``solid.''

Hereafter we denote the MLE of the parameter $m$ (in the full model or submodel according to the context) by
\begin{equation}
\label{MLE_m}
\widehat{m}\triangleq (\widehat{m}_0, \ldots, \widehat{m}_p),
\end{equation}
and measure the discrepancy between the true distribution and the predictive distribution using K-L divergence,
\begin{equation}
\label{K-Ldive}
D[\widehat{m}:m]=\sum_{i=0}^p \widehat{m}_i \log \frac{\widehat{m}_i}{m_i}
\end{equation}
Note that the MLE for the submodel is given by the projection onto the submodel of the MLE of the full model. (See \cite{Gokhale&Kullback} for the use of Kullback--Leibler divergence in the inference for the multinomial distribution.)

We evaluate the performance of MLE $\widehat{m}$ through the risk, that is 
\begin{equation}
\label{def_ED^-1}
ED=ED[\widehat{m}: m]\triangleq E\bigl[D[\widehat{m}: m]\bigr].
\end{equation}

Here we show that the $m$-aggregation submodel and K-L divergence are ``mismatched''.  Treat $m$ as an element of the set of $p+1$-dimensional positive measures, i.e. 
\[
\bar{\mathcal{M}} \triangleq \bigl\{m=(m_0,\ldots, m_p) \,|\, m_i > 0, \ i=0,\ldots,p \bigr\}, 
\]
where the sum is not  necessarily equal to one. Let 
\begin{equation}
\label{def_theta}
\theta^i \triangleq \log{m_i}+1,\qquad i=0, \ldots,p
\end{equation}
and 
\[
\Psi(\theta) \triangleq \sum_{i=0}^p e^{\theta^i-1}= \sum_{i=0}^p m_i.
\]
Then $\Psi(\theta)$ is the convex function of  $\theta=(\theta^0,\ldots,\theta^p)$, and as
\[
\frac{\partial \quad}{\partial \theta^i} \Psi(\theta) = e^{\theta^i-1} = m_i, \qquad i=0,\ldots,p,
\]
$m$ is the dual coordinate of $\theta$. The divergence  induced from $\Psi(\theta)$ on $\bar{\mathcal{M}}$ equals 
\begin{equation}
\label{conjugate_KL}
\begin{split}
D_{\Psi}[\theta_1\,|\, \theta_2]  &= \Psi(\theta_1) - \Psi(\theta_2) - \sum_{j=0}^p m^2_j (\theta_1^j-\theta_2^j) \\
&= \sum_{i=0}^p m_i^1 - \sum_{i=0}^p m_i^2 + \sum_{i=0}^p m^2_i \log{(m_i^2/m_i^1)}
\end{split}
\end{equation}
for $\theta_i = (\theta_i^0,\ldots,\theta_i^p),$ and their dual coordinates $m^i= (m^i_0,\ldots, m^i_p),\ i=1,2$. If the two points $\theta_1$ and $\theta_2$ are probabilities, i.e. $\sum_{j=0}^p m ^{i}_j=1$ for $i=1,2$, then
\begin{equation}
\label{conjugate_KL2}
D_{\Psi}[\theta_1\,|\, \theta_2] = \sum_{j=0}^p m_j^2 \log{(m_j^2/m_j^1)}.
\end{equation}

This is the conjugate of K-L divergence, and not K-L divergence itself. Actually
\[
D[\theta_2\,|\, \theta_1] = D_{\Psi}[\theta_1\,|\, \theta_2].
\]
Since the $m$-aggregation submodel is an affine subspace in $\bar{\mathcal{M}}$ with respect to $m$, if we use $D_{\Psi}[\cdot \,|\, \cdot]$, Pythagorean theorem guarantees the smaller risk of $\hat{\theta}_*$ than $\hat{\theta}$. However if we use K-L divergence, this is not theoretically guaranteed. 

In this paper,  the asymptotic expansion of risk is derived for non-overlapping $m$-aggregation submodel with respect to the sample size $n$ up to the second-order term (Section \ref{sec:two-stage}). The first-order term ($n^{-1}$-order term) and the second-order term ($n^{-2}$-order term) provide information on the asymptotic efficiency of the MLE for the submodel. 

Sheena \cite{Sheena} derived the asymptotic expansion of the risk of MLE with regard to $\alpha$-divergence for a general  parametric model that is well-specified (namely the model that contains the true distribution). (See Theorem 1 of \cite{Sheena}.)
As an application to the full model multinomial distribution, it demonstrated (see (42) of \cite{Sheena}) that
\begin{equation}
\label{ED_-1_expan_full}
ED=\frac{p}{2n}+\frac{1}{12n^2}(M-1)+o(n^{-2}),
\end{equation}
where 
\begin{equation}
\label{def_M}
M \triangleq \sum_{i=0}^p m^{-1}_i.
\end{equation}
The first-order term is proportional to the ratio of the model's dimension to the sample size (``$p-n$ ratio''), which indicates (relative) complexity of the model. Since this holds true for any parametric model, the first-order term for any well-specified submodel  is smaller than that of the full model, which is the benefit of the dimension reduction for the submodel.  Consequently when the sample size is large enough, the risk of the submodel is smaller than that of the full model, and as the sample size goes to infinity, the risk ratio between the sub and full models converges to the dimensions ratio.

However as the result of this paper (Corollary \ref{coro_second_order_term}) shows, the second-order term of the non-overlapping $m$-aggregation submodel is always larger than that of the full model unless the prior information is solid. This indicates if the sample size $n$ is not large enough to neglect the second-order term, the risk reduction of the submodel does not become as small as we expect from the dimension reduction, or even anomaly, reversed risk order, could happen. In Section \ref{example}, simulation studies demonstrate that these phenomena actually happen.

Improvement of the inference efficiency using prior knowledge is an important theme also in hypothesis testing. For this issue,  we refer to Trosset et. al. \cite{Trosset_et_al}. They treated Marden's conjecture that the additional information on the alternative hypothesis increase the power of the likelihood test. They showed that this conjecture fails in some concrete distributions even when the information reduces the dimension in the alternative hypothesis.
\section{Risk of non-overlapping $m$-aggregation submodel}
\label{risk_m_family}
\subsection{Two-stage multinomial distribution models}
\label{sec:two-stage}
\ \ For the risk analysis of a non-overlapping submodel, it is convenient to decompose each trial (experiment) into  two stages. In this subsection, we consider the ``two-stage multinomial distribution model'.

We begin by defining the two-stage multinomial distribution. Suppose that the random variable $X$ takes values that belong to one of the categories $C_{ij}$ and that
\begin{equation}
m_{ij}\triangleq P(X \in C_{ij})\ (>0),\qquad m_{i\cdot}\triangleq \sum_{j=1}^{J_i} m_{ij}, \qquad p_{ij}\triangleq \frac{m_{ij}}{m_{i\cdot}}
\end{equation}
for $i=1,\ldots,I, \ j=1,\ldots, J_i$. (To eliminate a trivial case, we suppose that $I\geq 2$.)
Let $C_{i\cdot} \triangleq \bigcup\limits_{j} C_{ij}$. Then, the first-stage model is given by focusing on which $C_{i\cdot}$ the value of $X$ belongs to. Its parameters are given by $m_{i\cdot}\ (i=1,\ldots, I)$. The $i$th model ($i=1,\ldots, I$) in the second stage is given by the categories $C_{ij}$ and the corresponding probabilities $p_{ij}$ ($j=1,\ldots,J_i$) under the condition $X \in C_{i\cdot}$. 

If the restriction for the submodel is non-overlapping, the parameters $m_{ij}$ can be grouped by the restriction equation in which the parameter appears (the parameters that do not appear in any restrictions form one group). The full model is decomposed into two stages according to this grouping, and the full model can be considered as a two-stage model. 

Kullback--Leibler divergence satisfies the so-called ``chain rule.'' Let $(X | U)$ denote the conditional distribution of $X$ when $U$ is given. Suppose a pair of the distributions $(X | U)_i$ and $U_i$ defines the distribution of $X_i$ for $i=1,2$. Then, the following relationship (chain rule) holds.
\begin{equation}
\label{chain_rule}
D[X_1 : X_2] = D[U_1 : U_2] +  E_{U_1}\bigl[D[(X | U)_1 : (X | U)_2] \bigr],
\end{equation}
where $E_{U_1}[\cdot ]$ is the expectation under the condition $U \stackrel{d}{=} U_1$.
(For more details on divergences, see Vajda \cite{Vajda} and Amari and Nagaoka \cite{Amari&Nagaoka}.) 

 Thanks to this property, we can decompose the MLE risk into several parts, each of which corresponds to the first- and second-stage distributions. 

When a sample of size $n$ is taken from the two-stage multinomial distribution, let $x_{ij}$ and $ x_{i\cdot}$  $(i=1,\ldots,I, j=1,\ldots,J_i)$ denote the number of individuals that belong to $C_{ij}$ and $C_{i\cdot}$, respectively, hence
\[
x_{i\cdot} = \sum_{j=1}^{J_i}x_{ij},\qquad \sum_{i=1}^{I}x_{i\cdot}=n.
\]
The notations of $X_{ij}$ and $X_{i \cdot}$ are similarly defined as the corresponding random variables.

In this section, we always assume that $m_{i\cdot}$ and $p_{ij}$ are parameterized independently, that is, 
\begin{align}
&\text{1. $m_f\triangleq (m_{1\cdot},\ldots,m_{I\cdot}) $ depends on $\tau\triangleq(\tau_1,\ldots,\tau_t)$.} \label{parameter_first_stage}\\
&\text{2. For each $i\ (1\leq i \leq I)$, $p_i\triangleq (p_{i1},\ldots, p_{iJ_i})$ depends on 
$\theta^{(i)}\triangleq (\theta_1^{(i)},\ldots,\theta_{s_i}^{(i)})$.} \label{parameter_second_stage}
\end{align}
%
%
%
%
We obtain the following result.
\begin{theo}
\label{MLE_2nd_stage}
The MLE for the two-stage multinomial distribution model, $\widehat{m}=(\widehat{m}_{ij})$, is given by
\begin{equation}
\label{decomp_estimator_2stage}
\widehat{m}_{ij}=\widehat{m}_{i\cdot}\widehat{p}_{ij},\qquad i=1,\ldots,I, \ j=1,\ldots, J_i,
\end{equation}
with 
\[
\widehat{m}_{i\cdot}=m_{i\cdot}(\widehat{\tau}),\quad
\widehat{p}_{ij}=p_{ij}(\widehat{\theta}^{(i)}),\qquad i=1,\ldots,I, \ j=1,\ldots, J_i,
\]
where $\widehat{\tau}$ is the MLE for the first-stage model based on $x_{1\cdot},\ldots,x_{I\cdot}$, and for each $i(i=1,\ldots,I)$, $\widehat{\theta}^{(i)}$ is the MLE for the second-stage model based on $x_{i1},\ldots,x_{iJ_i}$.
\end{theo}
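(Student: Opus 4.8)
The plan is to show that, as a function of the model parameters, the full log-likelihood splits into a sum of terms each governed by exactly one of the variation-independent blocks $\tau,\theta^{(1)},\ldots,\theta^{(I)}$, so that the joint maximization decouples into the stage-wise maximizations asserted. First I would write the likelihood of the observed counts. Up to a multinomial coefficient not involving the parameters, the log-likelihood is
\[
\ell = \sum_{i=1}^I \sum_{j=1}^{J_i} x_{ij}\log m_{ij}.
\]
Substituting the defining relation $m_{ij}=m_{i\cdot}\,p_{ij}$ and using $\sum_{j} x_{ij}=x_{i\cdot}$, this becomes
\[
\ell = \sum_{i=1}^I x_{i\cdot}\log m_{i\cdot} \;+\; \sum_{i=1}^I \sum_{j=1}^{J_i} x_{ij}\log p_{ij}.
\]

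The next step is to identify each piece with a stage likelihood. The first sum is exactly the log-likelihood of the first-stage multinomial model, whose categories are $C_{i\cdot}$ with probabilities $m_{i\cdot}$ and whose data are the aggregated counts $x_{1\cdot},\ldots,x_{I\cdot}$; by \eqref{parameter_first_stage} it is a function of $\tau$ alone. For each fixed $i$, the inner sum $\sum_j x_{ij}\log p_{ij}$ is the log-likelihood of the $i$th second-stage (conditional) multinomial over $C_{ij}\ (j=1,\ldots,J_i)$ given $X\in C_{i\cdot}$, based on the counts $x_{i1},\ldots,x_{iJ_i}$; by \eqref{parameter_second_stage} it is a function of $\theta^{(i)}$ alone, and distinct values of $i$ involve disjoint parameter blocks.

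With this decomposition in hand, the decisive point is that $\tau,\theta^{(1)},\ldots,\theta^{(I)}$ are parameterized independently, so that the parameter space is a product set and maximizing $\ell$ over all of them is equivalent to maximizing each summand over its own block separately. This yields $\widehat{\tau}=\argmax_{\tau}\sum_i x_{i\cdot}\log m_{i\cdot}(\tau)$ and $\widehat{\theta}^{(i)}=\argmax_{\theta^{(i)}}\sum_j x_{ij}\log p_{ij}(\theta^{(i)})$, i.e. precisely the first- and second-stage MLEs in the statement. Finally, by the functional invariance of the MLE I would conclude
\[
\widehat{m}_{ij}=m_{ij}(\widehat{\tau},\widehat{\theta}^{(i)})=m_{i\cdot}(\widehat{\tau})\,p_{ij}(\widehat{\theta}^{(i)})=\widehat{m}_{i\cdot}\,\widehat{p}_{ij}.
\]

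The argument is essentially a likelihood factorization, so there is no genuinely hard analytic step; the one place that requires care, and the only place where the hypotheses are really used, is the variation-independence of the parameter blocks asserted in \eqref{parameter_first_stage} and \eqref{parameter_second_stage}. If $\tau$ and the $\theta^{(i)}$ shared components, as they would in an overlapping model, the summands could not be maximized separately and the clean product form would fail. Accordingly, I would state the product structure of the parameter space explicitly before splitting the maximization, so that the passage from ``maximize the sum'' to ``maximize each term'' is unambiguous.
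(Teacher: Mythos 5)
Your proposal is correct and follows essentially the same route as the paper's own proof: factor the log-likelihood via $m_{ij}=m_{i\cdot}(\tau)\,p_{ij}(\theta^{(i)})$, recognize the two summands as the first- and second-stage log-likelihoods, and use the variation-independence of $\tau,\theta^{(1)},\ldots,\theta^{(I)}$ to split the joint maximization, concluding by functional invariance of the MLE. Your explicit emphasis on the product structure of the parameter space is a point the paper leaves implicit, but the argument is the same.
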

\begin{proof}
For the sample $x=(x_{ij})$, the log-likelihood is expressed as 
\begin{align*}
\log f(x;m)&=\sum_{i=1}^I \sum_{j=1}^{J_i} x_{ij} \log m_{ij}+c(x) \\
&=\sum_{i=1}^I \Bigl( \sum_{j=1}^{J_i}x_{ij}  \bigl(\log m_{ij}/m_{i\cdot}+\log m_{i\cdot}\bigr)\Bigr)+c(x) \\
&=\sum_{i=1}^I \Bigl( \sum_{j=1}^{J_i}x_{ij}  \bigl(\log p_{ij}(\theta^{(i)})+\log m_{i\cdot}(\tau)\bigr)\Bigr)+c(x) \\
&=\sum_{i=1}^I x_{i\cdot} \log m_{i\cdot}(\tau)+\sum_{i=1}^I \sum_{j=1}^{J_i}x_{ij}  \log p_{ij}(\theta^{(i)})+c(x) \\
&\Bigl(\triangleq \log f(x; \tau,\theta), \quad \theta=(\theta^{(1)},\ldots,\theta^{(I)})\Bigr)
\end{align*}
We obtain 
\[
\max_{\tau,\theta}f(x; \tau,\theta)=\max_{\tau} \sum_{i=1}^I x_{i\cdot} \log m_{i\cdot}(\tau)+\sum_{i=1}^I \max_{\theta^{(i)}}\sum_{j=1}^{J_i}x_{ij}  \log p_{ij}(\theta^{(i)}).
\]
We notice that
\[
\sum_{i=1}^I x_{i\cdot} \log m_{i\cdot}(\tau)
\]
is the log-likelihood function of $\tau$ up to constant for the first-stage model based on $x_{1\cdot},\ldots,x_{I\cdot}$, whereas, for each $i (1\leq i \leq I)$, the log-likelihood function of $\theta^{(i)}$ for the second-stage model is given by 
\[
\sum_{j=1}^{J_i}x_{ij}  \log p_{ij}(\theta^{(i)})
\]
up to constant with the sample $x_{i1},\ldots,x_{iJ_i}$. Since $m_{ij}=m_{i\cdot}(\tau)p_{ij}(\theta^{(i)})$, all the results are obtained.
\end{proof}


The following decomposition of $D[\widehat{m}: m]$ holds;
\begin{align}
D[\widehat{m}: m]&=\sum_{i=1}^I \sum_{j=1}^{J_i} \widehat{m}_{ij} \log (\widehat{m}_{ij}/m_{ij})\nonumber\\
&=\sum_{i=1}^I \sum_{j=1}^{J_i} \widehat{m}_{i\cdot}\widehat{p}_{ij} \log \frac{\widehat{m}_{i\cdot}\widehat{p}_{ij}}{m_{i\cdot}p_{ij}}\nonumber\\
&=\sum_{i=1}^I \widehat{m}_{i\cdot} \log (\widehat{m}_{i\cdot}/m_{i\cdot}) +\sum_{i=1}^I \widehat{m}_{i\cdot}\sum_{j=1}^{J_i}\widehat{p}_{ij} \log(\widehat{p}_{ij}/p_{ij})\nonumber\\
&=D[\widehat{m}_f: m_f] + \sum_{i=1}^I \widehat{m}_{i\cdot} \:D[\widehat{p}_i:p_i], \label{chain_rule_-1}
\end{align}
where 
\[
\widehat{m}_f= (\widehat{m}_{1\cdot},\ldots\widehat{m}_{I\cdot}),\qquad \widehat{p}_i\triangleq (\widehat{p}_{i1},\ldots,\widehat{p}_{i J_i}),\quad 1\leq i \leq I.
\]

When the first-stage model is full, the MLE of $m_{i\cdot}$ is given by
$\widehat{m}_{i\cdot}=X_{i\cdot}/n,\ i=1,\ldots,I$, and the asymptotic expansion of the risk of the MLE for the first-stage model (denoted by $ED_f$) with the sample size $n$ is given by (see \eqref{ED_-1_expan_full})
\begin{equation}
\label{ED_-1_first_stage}
ED_f=\frac{I-1}{2n}+\frac{1}{12n^2}(M_f-1)+o(n^{-2}),
\end{equation}
where
\begin{equation}
\label{def_M_f}
M_f\triangleq \sum_{i=1}^I m_{i\cdot}^{-1}.
\end{equation}
For the $i$-th model ($i=1,\ldots,I$) at the second stage (not necessarily a full model), let $\overset{-1}{A}_{(i)}/(24n^2)$ denote the second-order term of $ED_{(i)}\triangleq E[D[\widehat{p}_i:p_i]]$ with the sample size $n$.
Namely, we have 
\begin{equation}
\label{i_th_second_stage_expan}
ED_{(i)}=\frac{s_i}{2n}+\frac{1}{24n^2}\overset{-1}{A}_{(i)}(p_i)+o(n^{-2}).
\end{equation}
Because the dimension of the $i$-th model at the second stage equals $s_i$ due to \eqref{parameter_second_stage}, the first-order term equals $s_i/(2n)$ (see
Theorem 1 of \cite{Sheena}).

It should be noted that it is possible that $X_{i \cdot}=0$. Hence, we can encounter a situation in which we are unable to estimate $p_{ij},\ j=1,\ldots,J_i$, the parameter of the $i$-th second-stage model, because there is no available sample. We overcome this problem by making it a rule to discard such samples with no estimation. In the following theorems, all the expectations are conditional on the state $X_{i \cdot} \ne 0,\ 1 \leq \forall i \leq I$. However, as Lemma \ref{expectations_difference} in the Appendix shows, the conditional expectations of Kullback--Leibler divergence and $(X_{i\cdot}-n m_{i \cdot})^k (k=1,2,\ldots)$ differ from those that are unconditional by $o(n^{-s})$ for any $s >0$. Therefore, we use the same notation as that for the unconditional distribution.(Avoiding zero probability estimates is a practically important issue. See Darsheid et.al. \cite{Darsheid_et_al}.)

The next theorem is Theorem 1 of Sheena \cite{Sheena_4}. It provides the decomposition of $ED$ when the first-stage model is a full model. 
%
%
%
%
%
%
\begin{theo}
\label{ED_-1_2stage}
If the first-stage model is a full model, then the risk of MLE is equal to
\begin{equation}
\label{risk_full_model}
ED[\widehat{m}:m]= \frac{p'}{2n}+\frac{1}{24n^2}\Bigl\{\sum_{i=1}^I m_{i\cdot}^{-1}\Bigl(\overset{-1}{A}_{(i)}+2\Bigr)-2\Bigr\}+o(n^{-2}), 
\end{equation}
where 
\[
p'\triangleq I-1+\sum_{i=1}^I s_i.
\]
\end{theo}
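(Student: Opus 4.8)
The plan is to start from the chain-rule decomposition \eqref{chain_rule_-1} and take expectations, so that
\[
\overset{-1}{ED}[\widehat{m}:m]=\overset{-1}{ED}_f+\sum_{i=1}^I E\bigl[\widehat{m}_{i\cdot}\,\overset{-1}{D}[\widehat{p}_i:p_i]\bigr].
\]
The first term is immediate from the full-model first-stage expansion \eqref{ED_-1_first_stage}. The whole difficulty lies in the $I$ summands of the second term, and in particular in the fact that although the first-stage MLE uses the full sample of size $n$, the second-stage MLE $\widehat{p}_i$ is computed from only the $X_{i\cdot}$ observations that fell in $C_{i\cdot}$, a \emph{random} sample size.

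First I would condition on the first-stage counts. Given $X_{i\cdot}$, the vector $\widehat{p}_i$ is exactly the MLE for the $i$-th second-stage model based on a sample of that size, so by \eqref{i_th_second_stage_expan} applied with $n$ replaced by $X_{i\cdot}$,
\[
E\bigl[\overset{-1}{D}[\widehat{p}_i:p_i]\bigm| X_{i\cdot}\bigr]=\frac{s_i}{2X_{i\cdot}}+\frac{1}{24X_{i\cdot}^2}\overset{-1}{A}_{(i)}+o(X_{i\cdot}^{-2}).
\]
Since $\widehat{m}_{i\cdot}=X_{i\cdot}/n$ is $X_{i\cdot}$-measurable, the key algebraic point is that multiplying the conditional risk by $\widehat{m}_{i\cdot}$ cancels one power of $X_{i\cdot}$: the $s_i$-term becomes the deterministic constant $s_i/2$, and the second-order term drops from order $X_{i\cdot}^{-2}$ to order $X_{i\cdot}^{-1}$. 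Taking expectations over $X_{i\cdot}\sim\mathrm{Bin}(n,m_{i\cdot})$ then yields
\[
E\bigl[\widehat{m}_{i\cdot}\,\overset{-1}{D}[\widehat{p}_i:p_i]\bigr]=\frac{s_i}{2n}+\frac{\overset{-1}{A}_{(i)}}{24\,n}\,E\bigl[X_{i\cdot}^{-1}\bigr]+o(n^{-2}).
\]
The needed moment follows from a short Taylor expansion of $x\mapsto x^{-1}$ around $nm_{i\cdot}$: since $E[X_{i\cdot}]=nm_{i\cdot}$, one gets $E[X_{i\cdot}^{-1}]=(nm_{i\cdot})^{-1}(1+O(n^{-1}))$, so only the leading term survives at order $n^{-2}$, giving the contribution $\overset{-1}{A}_{(i)}/(24n^2m_{i\cdot})$.

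I would then collect the pieces. Summing over $i$ contributes $\bigl(\sum_i s_i\bigr)/(2n)$ to the first-order term and $(24n^2)^{-1}\sum_i m_{i\cdot}^{-1}\overset{-1}{A}_{(i)}$ to the second-order term, while \eqref{ED_-1_first_stage}, rewritten using $\tfrac1{12}=\tfrac{2}{24}$ and $M_f=\sum_i m_{i\cdot}^{-1}$, adds $(I-1)/(2n)$ and $(24n^2)^{-1}\cdot 2\bigl(\sum_i m_{i\cdot}^{-1}-1\bigr)$. The first-order terms combine to $p'/(2n)$ with $p'=I-1+\sum_i s_i$, and the second-order terms combine to $(24n^2)^{-1}\bigl\{\sum_i m_{i\cdot}^{-1}(\overset{-1}{A}_{(i)}+2)-2\bigr\}$, which is precisely \eqref{risk_full_model}.

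The main obstacle is rigor in the two limiting steps that involve the random, possibly tiny, sample size $X_{i\cdot}$: the conditional expansion above is an asymptotic statement valid for large $X_{i\cdot}$, so its use for small $X_{i\cdot}$, the interchange of the $o(\cdot)$ remainder with the expectation, and the control of $E[X_{i\cdot}^{-1}]$ on the event that $X_{i\cdot}$ is small all need justification. This is exactly where the conditioning convention matters: working on the event $\{X_{i\cdot}\neq 0,\ 1\le\forall i\le I\}$ and invoking Lemma \ref{expectations_difference} of the Appendix — which bounds the gap between conditional and unconditional moments of $X_{i\cdot}$ and of the divergence by $o(n^{-s})$ for every $s>0$ — lets me discard the negligible small-count events (whose probability is exponentially small, since $X_{i\cdot}$ concentrates at $nm_{i\cdot}$) and legitimately truncate every expansion at order $n^{-2}$.
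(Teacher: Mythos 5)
Your proof is correct, and its algebra reproduces \eqref{risk_full_model} exactly. One point of context: the paper does not actually prove this theorem in-text---it imports it as Theorem 1 of \cite{Sheena_4}---but your argument is precisely the technique the paper uses to prove the companion Theorem \ref{ED_-1_2stage_fst_known}: the chain-rule decomposition \eqref{chain_rule_-1}, conditioning on the first-stage counts, substituting the random sample size $X_{i\cdot}$ into the second-stage expansion \eqref{i_th_second_stage_expan}, and Taylor expansion of the moments of $X_{i\cdot}^{-1}$ as in \eqref{expec_X^-1}. The one structural difference from that proof is the cancellation you identify: because the weight here is $\widehat{m}_{i\cdot}=X_{i\cdot}/n$ rather than the constant $m_{i\cdot}$, the $s_i$-term collapses to exactly $s_i/(2n)$ and only the leading term of $E[X_{i\cdot}^{-1}]$ is needed, which is why no analogue of the $12(1-m_{i\cdot})s_i$ correction of \eqref{risk_sub_model} appears in \eqref{risk_full_model}. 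On rigor, your appeal to Lemma \ref{expectations_difference} covers the nonzero-count conditioning, but the step that turns the deterministic remainder $o(n^{-2})\big|_{n=X_{i\cdot}}$ into $o_p(n^{-2})$ is what the paper's Lemma \ref{converge->p_converge} is for (using $X_{i\cdot}/n \to m_{i\cdot}>0$ in probability); citing it explicitly would bring your proof to the same level of rigor the paper itself adopts, which still omits the final justification that $E[o_p(n^{-2})]=o(n^{-2})$.
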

%
%
%
%
%
%
%
%
%
%
\subsection{Asymptotic risk expansion for the submodel}
%
%
%
%
\ \ Suppose that we have prior knowledge on $\tau$ in \eqref{parameter_first_stage}, equivalently $m_f$. The next theorem gives the risk of MLE for this situation.
\begin{theo}
\label{ED_-1_2stage_fst_known}
If the first-stage model is a full model and $\tau_i\ (i=1,\ldots,I)$ are all known, then the risk of MLE is
\begin{equation}
\label{risk_sub_model}
ED[\widehat{m}:m]
=\frac{1}{2n}\sum_{i=1}^I s_i +\frac{1}{24n^2}\sum_{i=1}^I m_{i\cdot}^{-1}\Bigl(\overset{-1}{A}_{(i)}+12(1-m_{i\cdot})s_i\Bigr)+o(n^{-2}).
\end{equation}
Especially, if all the second-stage models are full models, then the risk of MLE is 
\begin{equation}
\label{risk_sub_2nd_full} 
ED[\widehat{m}:m]
=\frac{p'-(I-1)}{2n} +\frac{1}{12n^2}\Bigl(\sum_{i=1}^I\sum_{j=1}^{J_i} m_{ij}^{-1}+\sum_{i=1}^I m_{i\cdot}^{-1}(6J_i-7)-6(p'+1-I)\Bigr)+o(n^{-2}).
\end{equation}
\end{theo}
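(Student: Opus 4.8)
The plan is to exploit the chain-rule decomposition \eqref{chain_rule_-1} together with the fact that knowing $\tau$ removes all first-stage estimation error. Since $\tau$ is known, $\widehat{m}_{i\cdot}=m_{i\cdot}(\tau)=m_{i\cdot}$ is deterministic, so $\widehat{m}_f=m_f$ and the first summand $\overset{-1}{D}[\widehat{m}_f:m_f]$ in \eqref{chain_rule_-1} vanishes identically. Pulling the deterministic weights $m_{i\cdot}$ out of the expectation reduces the risk to
\[
\overset{-1}{ED}[\widehat{m}:m]=\sum_{i=1}^I m_{i\cdot}\,E\bigl[\overset{-1}{D}[\widehat{p}_i:p_i]\bigr],
\]
so that the whole problem becomes the aggregation of the $I$ second-stage risks.

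The essential subtlety is that the $i$-th second-stage MLE $\widehat{p}_i$ is computed from the group-$i$ subsample, whose size is not $n$ but the random variable $X_{i\cdot}$: conditional on the first stage, $X_{i\cdot}\sim\mathrm{Bin}(n,m_{i\cdot})$, and given $X_{i\cdot}=N$ the group-$i$ data are multinomial $(N;p_{i1},\ldots,p_{iJ_i})$. Hence I would condition on $X_{i\cdot}=N$, invoke the fixed-sample-size expansion \eqref{i_th_second_stage_expan} with $N$ in place of $n$, and then average over $X_{i\cdot}$:
\[
E\bigl[\overset{-1}{D}[\widehat{p}_i:p_i]\bigr]=\frac{s_i}{2}\,E\bigl[X_{i\cdot}^{-1}\bigr]+\frac{\overset{-1}{A}_{(i)}}{24}\,E\bigl[X_{i\cdot}^{-2}\bigr]+(\text{remainder}).
\]
This requires the asymptotic inverse moments of a binomial, which I would obtain by Taylor expanding $x\mapsto x^{-k}$ about the mean $nm_{i\cdot}$, using the binomial variance $nm_{i\cdot}(1-m_{i\cdot})$ and third central moment. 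This yields $E[X_{i\cdot}^{-1}]=(nm_{i\cdot})^{-1}+(1-m_{i\cdot})(nm_{i\cdot})^{-2}+O(n^{-3})$ and $E[X_{i\cdot}^{-2}]=(nm_{i\cdot})^{-2}+O(n^{-3})$. Substituting these, multiplying by $m_{i\cdot}$, summing over $i$, and collecting the $n^{-1}$ and $n^{-2}$ terms produces \eqref{risk_sub_model}, the key recombination being $m_{i\cdot}^{-1}\overset{-1}{A}_{(i)}+12 s_i(1-m_{i\cdot})m_{i\cdot}^{-1}$ inside the $(24n^2)^{-1}$ factor.

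The main obstacle is the legitimacy of this conditioning-and-averaging step rather than the algebra: the expansion \eqref{i_th_second_stage_expan} holds for fixed sample size, whereas $X_{i\cdot}$ can be arbitrarily small (indeed $0$). I would dispatch this in two parts. First, the event $X_{i\cdot}=0$ is handled by the paper's discarding convention, and Lemma \ref{expectations_difference} guarantees that conditioning on $X_{i\cdot}\neq 0$ for all $i$ changes every relevant expectation only by $o(n^{-s})$, so the unconditional binomial inverse-moment expansions above may be used verbatim. Second, one must check that the $o(N^{-2})$ remainder in \eqref{i_th_second_stage_expan}, evaluated at $N=X_{i\cdot}$ and then averaged, still contributes only $o(n^{-2})$; this follows because the probability that $X_{i\cdot}$ deviates from $nm_{i\cdot}$ by an order-$n$ amount is exponentially small, so the small-$X_{i\cdot}$ range is negligible and on the bulk $X_{i\cdot}$ is of exact order $n$.

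Finally, for the special case in which every second-stage model is full, the $i$-th second-stage model is a full multinomial on $J_i$ categories, so $s_i=J_i-1$, and \eqref{ED_-1_expan_full} applied at the second stage identifies $\overset{-1}{A}_{(i)}=2\bigl(M_{(i)}-1\bigr)$ with $M_{(i)}=\sum_{j=1}^{J_i}p_{ij}^{-1}$. Using $p_{ij}=m_{ij}/m_{i\cdot}$ gives $m_{i\cdot}^{-1}M_{(i)}=\sum_{j}m_{ij}^{-1}$, and substituting into \eqref{risk_sub_model} and simplifying with the identity $\sum_{i=1}^I J_i=p'+1$ (equivalently $\sum_i(J_i-1)=p'+1-I$) collapses the second-order term to the form in \eqref{risk_sub_2nd_full}. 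I expect this last step to be purely mechanical once the general formula is established.
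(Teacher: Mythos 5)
Your proposal is correct and follows essentially the same route as the paper's own proof: the chain-rule decomposition with $\widehat{m}_f=m_f$, conditioning on the random group sizes $X_{i\cdot}$ and applying the fixed-sample expansion \eqref{i_th_second_stage_expan} at $N=X_{i\cdot}$, the binomial inverse-moment expansions $E[X_{i\cdot}^{-1}]$ and $E[X_{i\cdot}^{-2}]$, Lemma \ref{expectations_difference} to dispose of the $X_{i\cdot}=0$ event, and the substitutions $s_i=J_i-1$, $\overset{-1}{A}_{(i)}=2\bigl(m_{i\cdot}\sum_{j=1}^{J_i}m_{ij}^{-1}-1\bigr)$ with $\sum_{i=1}^I J_i=p'+1$ for the special case. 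The only (immaterial) differences are bookkeeping ones: you state the inverse-moment remainders as $O(n^{-3})$ where the paper writes $O(n^{-5/2})$, and you sketch a large-deviation justification for averaging the $o(N^{-2})$ remainder, a step the paper handles via Lemma \ref{converge->p_converge} and otherwise omits.
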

\begin{proof}
Since $\widehat{m}_{i\cdot}=m_{i\cdot},\ i=1,\ldots,I$, that is, $\widehat{m}_f=m_f$, we have 
\[
D[\widehat{m}: m]= \sum_{i=1}^I m_{i\cdot} \:D[\widehat{p}_i:p_i]
\]
(see \eqref{chain_rule_-1}). 
\begin{align*}
&ED[\widehat{m}:m]\\
&\triangleq E[D[\widehat{m}:m]]\\
&=E_{X_{1\cdot},\cdots,X_{I\cdot}} \bigl[E[D[\widehat{m}:m] | X_{1\cdot},\ldots,X_{I\cdot}] \bigr]\\
&= E_{X_{1\cdot},\cdots,X_{I\cdot}} \Bigl[\sum_{i=1}^I m_{i\cdot} \:E\bigl[D[\widehat{p}_i:p_i] \big| X_{1\cdot},\ldots,X_{I\cdot}\bigr] \Bigr]\\
&=\sum_{i=1}^I  E_{X_{i\cdot}} [ m_{i\cdot} \:ED_{(i)}(X_{i\cdot}) ],
\end{align*}
where
\begin{align*}
ED_{(i)}(X_{i\cdot})&\triangleq  E[D\bigl[\widehat{p}_{i}:p_{i}]\big|X_{i\cdot}\bigr].
\end{align*}
From \eqref{i_th_second_stage_expan}, substituting $n$ with $X_i$, we have
\[
ED_{(i)}(X_{i\cdot})=\frac{s_i}{2X_{i\cdot}}+\frac{1}{24X_{i\cdot}^2}\overset{-1}{A}_{(i)}+r_i(X_{i\cdot}),
\]
where 
\[
|X_{i\cdot}^3 r_i(X_i)| < \exists M_i.
\]
Therefore we have
\begin{align*}
&E_{X_{i\cdot}} [ m_{i\cdot} \:ED_{(i)}(X_{i\cdot}) ]\\
&=\frac{s_i m_{i\cdot}}{2}E[X_{i\cdot}^{-1}]+\frac{m_{i\cdot}}{24}\overset{-1}{A}_{(i)}E[X_{i\cdot}^{-2}]+E[r_i(X_{i\cdot})].
\end{align*}

Taylor expansions of $1/X_{i\cdot}$ and $1/X^2_{i\cdot}$ are given by
\begin{align*}
\frac{1}{X_{i \cdot}}&=\frac{1}{nm_{i\cdot}}-\frac{1}{(nm_{i\cdot})^2}(X_{i \cdot}-nm_{i\cdot})+\frac{1}{(nm_{i\cdot})^3}(X_{i \cdot}-nm_{i\cdot}) ^2+ \frac{1}{(nm^*_{i\cdot})^4}(X_{i \cdot}-nm_{i\cdot}) ^3 \\
\frac{1}{X^2_{i \cdot}}&=\frac{1}{(nm_{i\cdot})^2}-\frac{2}{(nm^*_{i\cdot})^3}(X_{i \cdot}-nm_{i\cdot})\\
\frac{1}{X^3_{i \cdot}}&=\frac{1}{(nm_{i\cdot})^3}-\frac{3}{(nm^*_{i\cdot})^4}(X_{i \cdot}-nm_{i\cdot})\\
\end{align*}
with suitable choice of $m^*_{i\cdot}\leq 1$ for each expansion.
Using $E[X_{i \cdot}-nm_{i\cdot}]=0$ and $E[(X_{i \cdot}-nm_{i\cdot})^2]=nm_{i\cdot}(1-m_{i\cdot})$ and the asymptotic distribution 
\[
\frac{1}{\sqrt{n}}(X_{i \cdot}-nm_{i\cdot}) \stackrel{d}{\rightarrow} N(0, m_i(1-m_i))
\]
we have
\begin{align}
\label{expec_X^-1}
E[X_{i \cdot}^{-1}]&=\frac{1}{nm_{i\cdot}}+\frac{1-m_{i\cdot}}{(nm_{i\cdot})^2}+O(n^{-5/2})\\
\label{expec_X^-2}
E[X_{i \cdot}^{-2}]&=\frac{1}{(nm_{i\cdot})^2}+O(n^{-5/2})\\
\label{expec_X^-3}
E[X_{i \cdot}^{-3}]&=\frac{1}{(nm_{i\cdot})^3}+O(n^{-7/2})
\end{align}
and 
\[
E[|r_i(X_{i\cdot}|]=E[X_{i\cdot}^{-3}|X_{i\cdot}^3r_i(X_{i\cdot})|] \leq M_i E[X_{i \cdot}^{-3}]= O(n^{-3}).
\]
From these evaluations, we have
\[
E_{X_{i\cdot}} [ m_{i\cdot} \:ED_{(i)}(X_{i\cdot}) ]=\frac
{s_i}{2n}+\frac{1}{24n^2}m_{i\cdot}^{-1}\Bigl(\overset{-1}{A}_{(i)}+12(1-m_{i\cdot})s_i\Bigr)+o(n^{-2}).
\]
Consequently, we have 
\begin{equation}
\label{ED_-1_two_stage_b}
ED[\widehat{m}:m]=\sum_{i=1}^I \Bigl(\frac
{s_i}{2n}+\frac{1}{24n^2}m_{i\cdot}^{-1}\Bigl(\overset{-1}{A}_{(i)}+12(1-m_{i\cdot})s_i\Bigr)\Bigr)+o(n^{-2}).
\end{equation}
When all the second-stage models are full, 
\[
s_i = J_i-1, \qquad \overset{-1}{A}_{(i)}= 2\Bigl(m_{i\cdot}\sum_{j=1}^{J_{i}}m_{ij}^{-1}-1\Bigr),\qquad i=1,\ldots,I.
\]
If we insert these results into \eqref{risk_sub_model}, we have
\[
\sum_{i=1}^I s_i =\sum_{i=1}^I (J_i-1)=p'+1-I
\]
and 
\begin{align*}
\sum_{i=1}^I m_{i\cdot}^{-1}\Bigl(\overset{-1}{A}_{(i)}+12(1-m_{i\cdot})s_i\Bigr)
&=\sum_{i=1}^I m_{i\cdot}^{-1}\Bigl(2 m_{i\cdot}\sum_{j=1}^{J_{i}}m_{ij}^{-1}-2+12(1-m_{i\cdot})(J_i-1)\Bigr)\\
&=2\sum_{i=1}^I \sum_{j=1}^{J_i}m_{ij}^{-1}+12\sum_{i=1}^I(1-J_i)+\sum_{i=1}^I m_{i\cdot}^{-1}(12J_i-14)\\
&=2\sum_{i=1}^I \sum_{j=1}^{J_i}m_{ij}^{-1}+2\sum_{i=1}^I m_{i\cdot}^{-1}(6J_i-7)+12(I-p'-1).
\end{align*}
\end{proof}
The prior knowledge on the values of $m_f$ is equivalent to the $m$-aggregation submodel
\begin{equation}
\label{non-overlap_m_sub}
m_{i \cdot}\triangleq\sum_{j=1}^{J_i} m_{ij}=c_i,\qquad i=1,\ldots,I
\end{equation}
where $c_i$'s are known constants. It is clear that any non-overlapping $m$-aggregation submodel can be treated as a two-stage model in which every second-stage model is a full model. Consequently, the risk for the non-overlapping $m$-aggregation submodel is given by \eqref{risk_sub_2nd_full}. 

We now compare the risk of \eqref{risk_full_model} and that of the \eqref{risk_sub_model}. Notice that the difference between Theorem \ref{ED_-1_2stage} and Theorem \ref{ED_-1_2stage_fst_known} is the existence of the prior information on $M_f$. We call the models in Theorem \ref{ED_-1_2stage} and \ref{ED_-1_2stage_fst_known} ``M1'' and ``M'2,'' respectively. If the second-stage models are all full, then M1 becomes the full model and M2 becomes the $m$-aggregation submodel. 

If we neglect the $o(n^{-2})$ terms, the difference between \eqref{risk_full_model} and \eqref{risk_sub_model} is equal to
\begin{equation}
\label{diff_risk_full_sub}
\frac{I-1}{2n}+\frac{1}{12n^2}\Bigl(M_f-1-6\sum_{i=1}^I (m_{i\cdot}^{-1}-1)s_i \Bigr),
\end{equation}
especially, when the second-stage models are all full models, 
\begin{equation}
\label{diff_risk_full_sub_2}
\frac{I-1}{2n} + \frac{1}{12n^2}\Bigl(-\sum_{i=1}^I m_{i\cdot}^{-1}(6J_i-7)+6(p'-I)+5\Bigr).
\end{equation}
Regarding the first-order terms, that of M2 is always smaller than that of M1. The risk ratio between the full and submodels is close to the dimension ratio 
\[
\frac{\sum_{i=1}^I s_i }{ I-1+\sum_{i=1}^I s_i}
\]
under a large enough sample size.

However, the second-order term of M2 can be larger than that of M1. Suppose that $s_i \geq 1 (i=1,\ldots,I)$, then 
\begin{align*}
M_f-1-6\sum_{i=1}^I (m_{i\cdot}^{-1}-1)s_i & \leq M_f-1-6\sum_{i=1}^I (m_{i\cdot}^{-1}-1) \\
&=-5M_f+6I-1 < -5 I^2 +6I -1 =-(5I-1)(I-1) < 0.
\end{align*}
the second inequality holds due to the rule that the arithmetic mean is not less than the harmonic mean
\begin{equation}
\label{lower_Mf}
\frac{M_f}{I}=\frac{\sum_{i=1}^I m_{i \cdot}^{-1}}{I} \geq \frac{I}{\sum_{i=1}^I m_{i\cdot}}=I.
\end{equation}

When the second-stage models are all full, $s_i = 0$ is equivalent to $J_i = 1$, namely 
\[
m_{i\cdot}=m_{i1}=c_i
\]
in \eqref{non-overlap_m_sub}. We call the restriction equation of this type ``solid'' as it provides solid information on the probability of a particular cell. The aforementioned result is applied to the $m$-aggregation model as a corollary.
\begin{coro}
\label{coro_second_order_term}
If none of the restriction equations of the $m$-aggregation submodel are solid, then the second-order term of the submodel is larger than that of the full model.
\end{coro}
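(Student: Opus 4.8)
The plan is to treat the Corollary as a direct specialization of the risk comparison already carried out for the two-stage model. First I would invoke the identification, made just before the Corollary, that a non-overlapping $m$-aggregation submodel is exactly a two-stage model in which every second-stage model is full; under this identification the model M1 is the full model and M2 is the submodel, so their second-order terms are read off from \eqref{risk_full_model} and \eqref{risk_sub_2nd_full}, and the second-order coefficient of their difference is the quantity appearing in \eqref{diff_risk_full_sub}.

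Next I would translate the hypothesis into the language of the two-stage model. A restriction equation $m_{i\cdot}=c_i$ is solid exactly when $J_i=1$, i.e. when the $i$-th second-stage model is trivial and $s_i=J_i-1=0$. Hence ``no restriction equation is solid'' is equivalent to $s_i\geq 1$ for every $i=1,\ldots,I$. The Corollary then follows once I show that, under this condition, the second-order coefficient in \eqref{diff_risk_full_sub} is negative; because \eqref{diff_risk_full_sub} is the full-model risk minus the submodel risk, a negative second-order coefficient means precisely that the submodel's second-order term is the larger of the two.

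The core inequality is $M_f - 1 - 6\sum_{i=1}^I (m_{i\cdot}^{-1}-1)s_i < 0$, which I would establish in two steps. Since $I\geq 2$ forces $m_{i\cdot}<1$, each factor $m_{i\cdot}^{-1}-1$ is strictly positive, so the bound $s_i\geq 1$ may be used to lower each $s_i$ to $1$, giving $\sum_i(m_{i\cdot}^{-1}-1)s_i \geq \sum_i (m_{i\cdot}^{-1}-1)=M_f-I$ and hence the upper bound $-5M_f+6I-1$. Then the arithmetic--harmonic mean inequality \eqref{lower_Mf}, namely $M_f\geq I^2$, yields $-5M_f+6I-1\leq -(5I-1)(I-1)$, which is strictly negative for $I\geq 2$. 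This is essentially the chain of inequalities already displayed in the text preceding the statement.

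I do not expect a genuine obstacle here, since the Corollary is a specialization of an inequality that has already been derived. The only points requiring care are bookkeeping ones: keeping track of the sign convention so that a ``negative difference'' is correctly read as ``submodel second-order term larger,'' and noting that the positivity $m_{i\cdot}^{-1}-1>0$ (a consequence of $I\geq 2$) is exactly what makes lowering each $s_i$ to $1$ a valid \emph{lower} bound on $\sum_i(m_{i\cdot}^{-1}-1)s_i$ rather than an upper bound.
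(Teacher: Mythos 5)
Your proposal is correct and follows essentially the same route as the paper: identify the submodel with the two-stage model whose second-stage models are all full, note that ``no solid restriction'' means $s_i\geq 1$ for all $i$, bound $\sum_{i=1}^I(m_{i\cdot}^{-1}-1)s_i$ from below using $m_{i\cdot}^{-1}-1>0$, and finish with the arithmetic--harmonic mean inequality \eqref{lower_Mf} to get $M_f-1-6\sum_{i=1}^I(m_{i\cdot}^{-1}-1)s_i\leq -(5I-1)(I-1)<0$ for $I\geq 2$. Your added care about the sign convention and about why $m_{i\cdot}^{-1}-1>0$ makes the $s_i\geq 1$ substitution a valid bound matches (and slightly tightens) the paper's own argument.
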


This means that \eqref{diff_risk_full_sub_2} is negative for some small values of $n$ when the restriction equations are all non-solid. To simplify the explanation, we roughly classify the sample size as follows: ``small sample size'' when \eqref{diff_risk_full_sub_2} is negative; ``medium sample size'' when \eqref{diff_risk_full_sub_2} is positive, but the second-order term of \eqref{diff_risk_full_sub_2} is still non-negligible; ``large sample size'' when the second-order term of \eqref{diff_risk_full_sub_2} is negligible.

Our conclusion is as follows:
\begin{itemize}
\item When $n$ is ``small,'' the rather pathological situation may occur that the submodel (i.e., the prior information) increases the risk, and hence, it is better not to use the submodel. 
\item When $n$ is ``medium,'' the submodel has an advantage over the full model. However, the submodel loses the estimation efficiency gained by the dimension reduction due to the larger second-order term. That is, the ratio of the risks between the sub and full models is not as small as the dimension ratio. 
\item When $n$ is ``large,'' the risk ratio between the sub and full models is close to the dimension ratio. It should be noted that the dimension ratio is close to one when the number of restriction equations in the submodel ($=I$) is quite small compared with the dimension of the full model ($=\sum_{i=1}^I J_i -1$). 
\end{itemize}

%
%
%
%
%
%
%
\section{Numeric analysis of some examples}
\label{example}
\ \ In the previous subsection, we observed the risk difference between the full model and the non-overlapping $m$-aggregation submodel through the approximated risks, that is, the asymptotic expansion of the risks up to the second order. In this subsection we confirm several results using simulation with three examples. Every example is a two-way contingency table and the non-overlapping $m$-aggregation submodel is given as the one in which the column sums are all known.  As a two-stage model, the first-stage model consists of the multinomial distribution over the columns and each second-stage model is the distribution over the rows within a given column.

To compare the submodel and full model, we also use the indicator, ``the required sample size (r.s.s) of the submodel to the full model under the condition $n_0$,'' which is defined by the solution $n^*$ of the following equation 
\[
ED _{s}(n^*) = ED _f(n_0),
\]
where $ED _{s}(n)$ and $ED _{f}(n)$ are the risks of the sub and full model considered as the function of the sample size, respectively. This reveals the required sample size for the submodel is equal in terms of risk with the full model of the sample size $n_0$. The risks in the equation above are calculated by approximation or simulation.

The following abbreviations are commonly used in the three examples;\\
``f.risk.sim(app)'' is the risk of the full model obtained by simulation(approximation).\\
``s.risk.sim(app)'' is the risk of the submodel obtained by simulation(approximation).\\
``ratio.sim(app)'' is the risk ratio between the submodel and the full model based on the simulated(approximated) risks.\\
``r.s.s.sim(app)'' is the r.s.s obtained by simulation(approximation).
%
%
%
%
%
\bigskip
\\
--\textit{Example 1}-- \\
The first example is an artificial setting to confirm some theoretical results in the previous subsection.  We consider a 100 by 2 contingency table for the two cases of contrasting distributions. For both cases, $ I=2, \ J_1=J_2=100,$ where in Example 1-1,
\[
m_{ij}=1/200\ (i=1,2,\  j=1,\ldots,100),\quad m_{1\cdot}=m_{2\cdot}=1/2.
\]
and in Example 1-2,
\[
m_{1j}=1/2000,\ m_{2j}=9/2000,\ j=1,\ldots,200.
\]
These distributions  are chosen from the following consideration;  Since the second-order term in the equation \eqref{diff_risk_full_sub_2} equals 
\[
\frac{1}{12n^2}\Bigl(- 593(m_{1\cdot}^{-1}+m_{2\cdot}^{-1})+6p'-7\Bigr),
\]
which is maximized when  $m_{1\cdot}=m_{2\cdot}=1/2$ (Example 1-1) and gets smaller when $m_{1\cdot}=1/10,\ m_{2\cdot}=9/10$ (Example 1-2).

The risks and r.s.s. under several values of $n$ are given in Table \ref{Table: risk_ex1-1} for Example 1-1 and Table \ref{Table: risk_ex1-2} for Example 1-2. The number in the parenthesis after the simulated risk is the standard deviation. The r.s.s is under the condition $n_0=n$. For the calculation of the simulated risk, we used $10^4$ sets of samples and took the average over these. 
\begin{table}
\caption{Example 1-1---Risk and R.S.S.---}
\label{Table: risk_ex1-1}
\centering
\scriptsize
\begin{tabular}[t]{|c|c|c|c|c|c|c|c|c|}
\hline
n & f.risk.app & s.risk.app & ratio.app & r.s.s.app & f.risk.sim & s.risk.sim &  ratio.sim & r.s.s.sim \\ \hline
100 & 1.3283  & 1.3332  & 1.0037  & 100  & 1.004589(5.3e-6)  & 1.006799(5.3e-6)  & 1.0022  & 101  \\ \hline
200 & 0.5808  & 0.5808  & 0.9999  & 200  & 0.571158(4.1e-6)  & 0.571305(4.1e-6)  & 1.0003  & 201  \\
\hline
300 & 0.3687  & 0.3681  & 0.9985  & 300  & 0.384620(3.3e-6)  & 0.384268(3.3e-6)   & 0.9991  & 300  \\
\hline
400 & 0.2696  & 0.2690  & 0.9977  & 399  & 0.283346(2.6e-6)  & 0.282871(2.6e-6)  & 0.9983  & 400  \\
\hline
500 & 0.2123  & 0.2117  & 0.9972  & 499  &  0.221933(2.2e-6)  & 0.221424(2.2e-6)   & 0.9977  & 499  \\
\hline
1000 & 0.1028  & 0.1024  & 0.9961  & 996  & 0.104093(1.1e-6)  & 0.103697(1.1e-6)  & 0.9962  & 996  \\ \hline
2000 & 0.0506  & 0.0504  & 0.9955  & 1991  & 0.050657(5.1e-7)   & 0.050430(5.1e-7)   & 0.9955  & 1992  \\ \hline
\end{tabular}
\end{table}
\begin{table}
\caption{Example 1-2---Risk and R.S.S.---}
\label{Table: risk_ex1-2}
\centering
\scriptsize
\begin{tabular}[t]{|c|c|c|c|c|c|c|c|c|}
\hline
n & f.risk.app & s.risk.app & ratio.app & r.s.s.app & f.risk.sim & s.risk.sim &  ratio.sim & r.s.s.sim \\ \hline
100 & 1.9209  & 1.9609  & 1.0208  & 101  & 0.800707(6.9e-6)  & 0.804596(5.7e-6)  & 1.0049  & 101 \\ \hline
200 & 0.7290  & 0.7377  & 1.0120  & 202  & 0.458571(4.1e-6)  & 0.460303(3.9e-6)  & 1.0038  & 201  \\
\hline
300 & 0.4345  & 0.4379  & 1.0077  & 302  & 0.322643(2.9e-6)  & 0.323684(2.9e-6)   & 1.0032  & 301  \\
\hline
400 & 0.3066  & 0.3082  & 1.0051  & 402  & 0.250190(2.1e-6)  & 0.250864(2.3e-6)  & 1.0027  & 402  \\
\hline
500 & 0.2360  & 0.2368  & 1.0034 & 501  &  0.205073(1.7e-6)  & 0.205388(1.9e-6)   & 1.0015  & 502  \\
\hline
1000 & 0.1088  & 0.1087  & 0.9995  & 1000  & 0.107846(9,2e-7)  & 0.107870(1.0e-6)  & 1.0002  & 1000  \\ \hline
2000 & 0.0521  & 0.0519  & 0.9974  & 1955  & 0.053458(5.1e-7)   & 0.053359(5.5e-7)   & 0.9981 & 1997  \\ \hline
\end{tabular}
\end{table}
We observe the following results:
\begin{itemize}
\item For $n=100\text{ or } 200$ in Ex.1-1 and all $n$'s except $n=2000$ in Ex.1-2,  the use of the submodel increases the risk. 
\item Since the dimension ratio ($=198/199\doteqdot0.9950 $) is close to one, the use of the submodel causes little risk reduction. In view of r.s.s., the contribution of the submodel is almost negligible. 
\item The approximated values for the risk are somewhat deviated from the simulated values when $n$ is small; however, those for the risk ratio or r.s.s. are quite close to the simulated values even when $n$ is small. 
\end{itemize} 
%
%
%
%
%
--\textit{Example 2}-- \\
We use real data on breast cancer taken from the "UCI machine learning repository" (https://archive.ics.uci.edu/ml/datasets/Breast+Cancer). We created a cross-tabulation table from the variable ``the degree of malignancy'' (3 levels:1,2,3) and ``age group'' (5 groups: 30--39,...,70--79) excluding one person in his or her twenties from the original data set. We gained the relative frequency by dividing each cell by the total number of individuals  ,285(see Table \ref{table:breast_cancer_prob}). We assume that this is the true probability $m_{ij},\ i=1,\ldots,3, j=1,\ldots,5$ for each category. We consider the submodel according to the situation in which we have prior knowledge of the sums of each column, that is, the distribution over the age groups.

Table \ref{table:breast_cancer_risk_full_sub} reveals the approximated and simulated risk (in parentheses) for the full and submodels for several cases of sample size $n$. It also shows the approximated and simulated r.s.s. of the submodel to the full model under the condition $n_0=n$ (the simulated r.s.s is in parentheses). 

We summarize the result as follows:
\begin{itemize}
\item For this example, when $n\leq 28$, the value of \eqref{diff_risk_full_sub} (equivalently \eqref{diff_risk_full_sub_2}) becomes negative. However, the simulation reveals that the risk of the submodel is always smaller than that of the full model. In fact, even if $n=10$, the former equals 0.6236 and the latter equals 0.6543.  
\item The dimension ratio is $10/14\doteqdot 0.714$. Since the table is small, the knowledge of the column sums significantly reduces the risk. However, we notice that the effect of the dimension reduction is lessened by the larger second-order term of the submodel as the ``Risk Ratio'' or ``R.S.S./$n$'' is larger than 0.714.
\item The approximation method for risk calculation is effective under the sample sizes in the table.
\end{itemize}
\begin{table}
\caption{Breast cancer classification} 
\label{table:breast_cancer_prob}
\centering
\begin{tabular}{|c | c | c | c | c | c| }
\hline
   &  30--39  &  40--49  &  50--59  &  60--69  &  70--79 \\
\hline
1 &0.025 & 0.063 & 0.088 & 0.060 & 0.014 \\
\hline
2 &0.060 & 0.168 & 0.137 & 0.084 & 0.004 \\
\hline
3 &0.042 & 0.084 & 0.112 & 0.056 & 0.004 \\
\hline
Column sum & 0.126 & 0.317 & 0.337 & 0.200 & 0.021\\
\hline
\end{tabular}
\end{table}
\begin{table}
\caption{Example 2---Risk and R.S.S.--- }
\label{table:breast_cancer_risk_full_sub}
\footnotesize
\begin{tabular}{c | c | c | c | c | c }
   &  $n=200$ &  $n=400$  &  $n=600$  &  $n=800$  &  $n=1000$ \\
\hline
Full Model & 0.0367(0.0361) &0.0179(0.0180) &0.0119(0.0119) &0.0089(0.0090) &0.0071(0.0071)
\\
\hline
Submodel & 0.0281(0.0277) &0.0133(0.0135) &0.0087(0.0088) &0.0064(0.0066) &0.0051(0.0052)\\
\hline
Risk Ratio &  0.766(0.769) & 0.741(0.752) &0.732(0.741) & 0.728(0.734) & 0.725(0.730) \\
\hline
R.S.S. & 158(155) & 302(305) &445(450) &588(591) &732(739)\\
\hline
R.S.S./$n$ & 0.790(0.775) &0.755(0.762)  &0.742(0.750) &0.735(0.739) &0.732(0.739)\\
\end{tabular}
\end{table}
%
%
%
%
%
\bigskip
--\textit{Example 3}-- \\
We use data from the ``2014 National Survey of Family Income and Expenditure'' by the Statistics Bureau in Japan (https://www.stat.go.jp/english/data/zensho/index.html). Table \nolinebreak \ref{table:prob_income_age} is the relative frequency obtained from the classification of 100,006 households according to ``Yearly income group (Y1,...,Y10)'' and ``Household age group (H1,...,H6).'' We use this relative frequency as the population parameter $m_{ij},\ 1\leq i \leq 10,\ 1\leq j \leq 6$. We consider the submodel that is based on the prior knowledge of all the column sums. Table \ref{table:household_risk_full_sub} presents the results. 

We summarize the results as follows:
\begin{itemize}
\item For this example, when $n\leq 133$, the value of \eqref{diff_risk_full_sub} (equivalently \eqref{diff_risk_full_sub_2}) becomes negative. The simulation result (not presented in Table \ref{table:household_risk_full_sub} ) shows that the pathological phenomenon (the reversed order in the risks) actually occurs, but only when $n$ is as small as 20. 
\item The dimension ratio is $54/59\doteqdot 0.915$. The table is larger than that of Example 2, and the knowledge of the column sums is not as useful in terms of reducing the dimension. The larger second-order term of the submodel is also a burden on risk reduction, which increases the ``Risk Ratio'' or ``R.S.S./$n$'' to over 0.915.
\item The approximation method for risk calculation works effectively under the sample sizes in the table.
\end{itemize}
\begin{table}
\caption{Household classification}
\label{table:prob_income_age}
\centering
\footnotesize
\begin{tabular}{|c|c|c|c|c|c|c|}
\hline
    &  H1 & H2 & H3 & H4 & H5 & H6 \\ \hline
Y1 & 0.00161 & 0.00232 & 0.00512 & 0.00395 & 0.00468 & 0.00066  \\ \hline
Y2 & 0.00331 & 0.0081 & 0.00953 & 0.00783 & 0.01145 & 0.00278 \\ \hline
Y3 &0.00974 & 0.02109 & 0.02046 & 0.01499 & 0.02536 & 0.00494 \\ \hline
Y4 &0.00799 & 0.03519 & 0.03229 & 0.02017 & 0.0338 & 0.00708  \\ \hline
Y5 &0.00547 & 0.0376 & 0.04362 & 0.02442 & 0.02675 & 0.00398 \\ \hline
Y6 &0.00494 & 0.05082 & 0.09003 & 0.05772 & 0.03732 & 0.00452  \\ \hline
Y7 &0.00126 & 0.02106 & 0.0543 & 0.05531 & 0.01999 & 0.00234 \\ \hline
Y8 &0.00071 & 0.00961 & 0.0323 & 0.04043 & 0.0108 & 0.00122  \\ \hline
Y9 &0.00011 & 0.00201 & 0.01204 & 0.02184 & 0.00466 & 0.00052  \\ \hline
Y10 & 0.00006 & 0.00139 & 0.00697 & 0.01582 & 0.00344 & 0.00022 \\ \hline
Col. Sum & 0.03520 & 0.18919 & 0.30666 & 0.26248 & 0.17825 & 0.02826 \\ \hline
\end{tabular}
\end{table}
\begin{table}
\caption{Example 3---Risk and R.S.S.---}
\label{table:household_risk_full_sub}
\footnotesize
\begin{tabular}{c | c | c | c | c | c }
   &  $n=1000$ &  $n=1500$  &  $n=2000$  &  $n=2500$  &  $n=3000$ \\
\hline
Full Model & 0.0372(0.0295) &0.0231(0.0197) &0.0167(0.0148) &0.0130(0.0119) &0.0107(0.0099)
\\
\hline
Submodel & 0.0351(0.0275) &0.0216(0.0183) &0.0155(0.0137) &0.0121(0.0109) &0.0099(0.0091)\\
\hline
Risk Ratio &  0.945(0.931) & 0.937(0.926) &0.932(0.924) & 0.929(0.923) & 0.927(0.922)\\
\hline
R.S.S. & 955(934) & 1419(1392) &1880(1848) &2339(2305) &2799(2773)\\
\hline
R.S.S./$n$ & 0.955(0.934) &0.946(0.928)  &0.940(0.924) &0.936(0.922) &0.933(0.924)\\
\end{tabular}
\end{table}
\section{Submodel and Divergence}
\ \ In this closing section, some complementary issues related to the pair of submodel and divergence are discussed. As explained in Introduction,  it is natural to use the conjugate K-L divergence \eqref{conjugate_KL2} when we treat the affine subspace of $\bar{\mathcal{M}}$ with respect to $m$. Let's call the submodel defined by some linear restrictions in $m$, ``$m$-submodel''. $m$-submodel is always a $m$-affince subspace of $\bar{\mathcal{M}}$, since the restriction $\sum_{i=0}^p m_i =1$ is also linear in $m$. 

Now let $\mathcal{M}^*$ denote a $m$-submodel, and let $\hat{m}$ be MLE for the full model, then we can consider two projections of $\hat{m}$ onto the $\mathcal{M}^*$: One is the projection by K-L divergence 
\[
\hat{m}^{+} =\displaystyle{\operatorname*{\mathrm{argmin}}_{m \in \mathcal{M^*}}} D[ \hat{m} \, | \, m];
\]
The other is the projection by the conjugate K-L divergence
\[
\hat{m}^{*} =\displaystyle{\operatorname*{\mathrm{argmin}}_{m \in \mathcal{M^*}}} D_\Psi[ \hat{m} \, | \, m]
\]
In general, $\hat{m}^{+}$ is not equal to $\hat{m}^{*}$, however in the case of the non-overlapping $m$-aggregation submodel,  $\hat{m}^{+}=\hat{m}^{*}$. This can be proved as follows; Suppose  $\mathcal{M}^*$  is defined by the following $I$ restriction equations (including the equation $m_0 + \cdots + m_p=1$),
\[
\sum_{j=0}^p a_{ij} m_{j} = c_i,\qquad i=1,\ldots, I,
\]
Then by the method of Lagrange multiplier, $\hat{m}^+$ is the solution of 
\[
\frac{\partial \quad}{\partial m_i} \Bigl\{ \sum_{j=0}^p \hat{m}_j \log{(\hat{m}_j/m_j)} - \sum_{k=1}^I \lambda_k\Bigl(\sum_{j=0}^p a_{kj} m_{j} -c_k\Bigr)\Bigr\}=0,\qquad i=0,\ldots,p
\]
or equivalently 
\[
\frac{\hat{m}_i}{m_i}=-\sum_{k=1}^I \lambda_k a_{ki}, \qquad i=0,\ldots,p.
\]
For $i \ne j$, if 
\begin{equation}
\label{same_coeff}
a_{ki} = a_{kj}, \quad 1 \leq \forall k \leq I,
\end{equation}
then 
\begin{equation}
\label{same_ratio_m}
\frac{\hat{m}_i}{m_i}=\frac{\hat{m}_j}{m_j}.
\end{equation}
In the case of the non-overlapping $m$-aggregation model, for any $m_i$ and $m_{j}$ in the same restriction equation \eqref{info_sum}, \eqref{same_coeff} holds, hence \eqref{same_ratio_m} holds. Therefore $\hat{m}^+_i$ that appears in the restriction \eqref{info_sum} is given by 
\begin{equation}
\label{concrete_m^+}
\hat{m}^+_{i_j} = \hat{m}_{i_j} \frac{c}{\sum_{j=1}^s \hat{m}_{i_j}},\qquad j=1,\ldots,s.
\end{equation}
Similarly $\hat{m}^*$ is the solution of 
\[
\frac{\partial \quad}{\partial m_i} \Bigl\{ \sum_{j=0}^p m_j \log{(m_j/\hat{m}_j)} - \sum_{k=1}^I \lambda_k\Bigl(\sum_{j=0}^p a_{kj} m_{j} -c_k\Bigr)\Bigr\}=0,\qquad i=0,\ldots,p,
\]
or equivalently
\[
\log{\frac{m_i}{\hat{m}_i}} =\sum_{k=1}^I \lambda_k a_{ki}-1.
\] 
For the non-overlapping m-aggregation submodel, the same argument as $\hat{m}^+$ holds, and $\hat{m}^*_i$ that appears in the restriction \eqref{info_sum} is also given by 
\begin{equation}
\label{concrete_m^*}
\hat{m}^*_{i_j} = \hat{m}_{i_j} \frac{c}{\sum_{j=1}^s \hat{m}_{i_j}},\qquad j=1,\ldots,s.
\end{equation}

For the overlapping submodel, $\hat{m}^{+}$ and $\hat{m}^{*}$  do not necessarily coincide, and are not simply expressed like \eqref{concrete_m^*} or \eqref{concrete_m^+}. Besides, its risk (the asymptotic expansion up to the second-order) needs to be calculated by another method than the two-stage model. 

Another important class of the submodels is the model given by the linear restriction in $\theta$ of \eqref{def_theta} (say ``$e$-submodel''). For example, in the two-way contingency table, if we assume each row (or column) is independently distributed, then this model is an $e$-submodel. In the two by two contingency table, the fixed odds-ratio produces an $e$-submodel. Notice that $e$-submodel is not an affine subspace of $\bar{\mathcal{M}}$, since the restriction $\sum_{i=0}^p m_i=1$ is not the linear restriction in $\theta$. Hence the use of the projection theorem and the related Pythagorean theorem is not straightforward.

\section*{Acknowledgment}
This work was supported by JSPS KAKENHI Grant Number JP20K11706.
%
%
%
%
%
\section*{Declarations}
On behalf of all authors, the corresponding author states that there is no conflict of interest.

%
%
%
%
%
\section*{Appendix}
%
%
%
%
\begin{lemma}
\label{expectations_difference}
Let $X=(X_0,\ldots,X_{p})$ be the random vector whose distribution is defined as the multinomial distribution with \eqref{full_model} and the sample size $n$. The distribution under the condition $X_i \ne 0,  0\leq \forall i \leq p$ is considered. Let the unconditional and conditional expectations of a random variable $Y(X)$ be denoted by $E[Y(X)]$ and $E^*[Y(X)]$, respectively. If 
$$
|Y(X)| \leq  a+b n^c 
$$
holds with some nonnegative numbers $a, b, c$, the difference between the two expectations decreases to zero with exponential speed as $n$ goes to infinity, namely for any $s>0$,
\begin{equation}
\label{diff_cond_uncond}
E^*[Y(X)]=E[Y(X)]+o(n^{-s}).
\end{equation}
In the special case, the following equations hold for any $s>0$;
\begin{align*}
E^*\bigl[D [\widehat{m}(X) : m ]\bigr] &=E \bigl[D [\widehat{m}(X) : m ]\bigr]+o(n^{-s}),\\
E^*[(X_i- n m_i)^k] &=  E[(X_i- n m_i)^k] +o(n^{-s}),\quad  i=1,\ldots,p+1,\ k=1,2,\ldots,
\end{align*}
where  $\widehat{m}(X)$ is the MLE of $m$.
\end{lemma}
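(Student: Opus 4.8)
The plan is to exploit the fact that the conditioning event has a complement of exponentially small probability, so that restricting to it perturbs any polynomially bounded functional by a quantity that is $o(n^{-s})$ for every $s>0$. Write $B\triangleq\{X_i=0\text{ for some }0\leq i\leq p\}$ for the discarded event and $G\triangleq B^c$ for the event on which all coordinates are positive. First I would bound $P(B)$: since $X_i$ is marginally binomial with $P(X_i=0)=(1-m_i)^n$, a union bound gives
\[
P(B)\leq\sum_{i=0}^p(1-m_i)^n\leq(p+1)\rho^n,\qquad \rho\triangleq 1-\min_{0\leq i\leq p}m_i<1,
\]
where $\rho<1$ precisely because every $m_i>0$ by \eqref{full_model}. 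Consequently $n^s P(B)\to 0$ for every $s>0$, that is $P(B)=o(n^{-s})$, and also $P(G)=1-P(B)\geq 1/2$ for all large $n$.

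Next I would write the difference of the two expectations explicitly. Since $E^*[Y(X)]=E[Y(X)\mathbf 1_G]/P(G)$ and $E[Y(X)]=E[Y(X)\mathbf 1_G]+E[Y(X)\mathbf 1_B]$, subtracting and using $1/P(G)-1=P(B)/P(G)$ yields
\[
E^*[Y(X)]-E[Y(X)]=E[Y(X)\mathbf 1_G]\,\frac{P(B)}{P(G)}-E[Y(X)\mathbf 1_B].
\]
Now the deterministic bound $|Y(X)|\leq a+bn^c$ controls both pieces: $|E[Y(X)\mathbf 1_G]|\leq a+bn^c$ and $|E[Y(X)\mathbf 1_B]|\leq(a+bn^c)P(B)$. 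Using $P(G)\geq 1/2$, the whole difference is bounded in absolute value by $3(a+bn^c)P(B)\leq 3(a+bn^c)(p+1)\rho^n$. Since a polynomial in $n$ times $\rho^n$ with $\rho<1$ tends to zero faster than any power of $n$, this is $o(n^{-s})$ for every $s>0$, which is \eqref{diff_cond_uncond}.

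Finally I would verify that the two special cases meet the hypothesis $|Y|\leq a+bn^c$. For the central moments, $|X_i-nm_i|\leq X_i+nm_i\leq 2n$ gives $|(X_i-nm_i)^k|\leq (2n)^k$, so $a=0,\ b=2^k,\ c=k$ works. For Kullback--Leibler divergence the bound is in fact uniform in $n$: on $G$ the MLE is $\widehat m_i=X_i/n\in(0,1]$, and writing
\[
\overset{-1}{D}[\widehat m:m]=\sum_{i=0}^p\widehat m_i\log\widehat m_i-\sum_{i=0}^p\widehat m_i\log m_i,
\]
the first sum lies in $[-(p+1)/e,0]$ because $t\log t\in[-1/e,0]$ on $[0,1]$, while the second is at most $-\log(\min_i m_i)$ since $\sum_i\widehat m_i=1$; on $B$ the convention $0\log 0=0$ makes every vanishing-coordinate term zero, so the same constant bound persists. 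Hence $\overset{-1}{D}$ is bounded by a constant ($c=0$) and the general result applies. I do not anticipate a genuine obstacle here; the only point requiring care is keeping the polynomial prefactor $n^c$ from interfering with the exponential decay, which is settled once one observes $n^s\rho^n\to 0$. The least mechanical step is the Kullback--Leibler bound, since one must confirm it survives on the discarded event $B$ through the $0\log 0=0$ convention.
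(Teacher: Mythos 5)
Your proof is correct and follows essentially the same route as the paper's: a union bound showing the discarded event has probability $o(n^{-s})$ for every $s$, the identity $E^*[Y]-E[Y]=E[Y\mathbf{1}_G]\,P(B)/P(G)-E[Y\mathbf{1}_B]$, and the polynomial hypothesis $|Y|\leq a+bn^c$ to absorb both terms into the exponential decay. The only differences are cosmetic refinements on your side --- the explicit rate $(p+1)\rho^n$ in place of the paper's logarithmic limit argument, and a fully worked constant bound for the Kullback--Leibler term (including its validity on $B$ via $0\log 0=0$), which the paper dispatches with a brief remark that $x\log x\to 0$ as $x\to 0$.
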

\begin{proof}
For $i (=1,\ldots,p+1)$ and $s (>0)$, because the following equivalence relationship holds
\begin{align*}
&n^s (1-m_i)^n \to 0 \\
&\iff s\log n +n \log(1-m_i) \to -\infty \iff n\Bigl(s\frac{\log n}{n}+\log(1-m_i)\Bigr) \to -\infty,
\end{align*}
we have
\begin{equation}
P(X_i=0)=(1-m_i)^n=o(n^{-s}).
\end{equation}
Let $Z_+^{p+1}$ be the set of all $p+1$-dimensional vectors whose elements are nonnegative integers.
\begin{align*}
\mathcal{X}&\triangleq \Bigl\{x=(x_1,\ldots,x_{p+1}) \in Z_+^{p+1} \Big| \sum_{j=1}^{p+1} x_j =n,\ x_i >0,\ 1\leq \forall i \leq p+1\Bigr\}\\
\mathcal{X}^*&\triangleq \Bigl\{x=(x_1,\ldots,x_{p+1}) \in Z_+^{p+1} \Big| \sum_{j=1}^{p+1} x_j =n, x_i=0,\ 1 \leq \exists i \leq p+1\Bigr\}
\end{align*}
Notice that
\[
m^*\triangleq P(X_i=0,\ 1\leq \exists i \leq p+1) \leq \sum_{i=1}^{p+1} P(X_i=0),
\]
which means $m^*=o(n^{-s})$. Because
\begin{align*}
E^*[Y(X)]&=\sum_{x \in \mathcal{X}} Y(x) P(X=x)/(1-m^*) \\
E[Y(X)]&=\sum_{x \in \mathcal{X}} Y(x) P(X=x) +\sum_{x \in \mathcal{X}^*} Y(x) P(X=x),
\end{align*}
we have
\[
E^*[Y(X)]-E[Y(X)]=\sum_{x \in \mathcal{X}} Y(x) P(X=x)\frac{m^*}{1-m^*}-\sum_{x \in \mathcal{X}^*} Y(x) P(X=x).
\]
Choose arbitrary $s(>0)$. Because $\mathcal{X}$ and $\mathcal{X}^*$ are both finite sets, for some nonnegative constants $a', a'', b', b''$, 
\begin{align*}
n^s\Bigl|\sum_{x \in \mathcal{X}} Y(x) P(X=x)\frac{m^*}{1-m^*} \Bigr|
&\leq n^s \sum_{x \in \mathcal{X}} (a+b n^c) \frac{m^*}{1-m^*}\\
&\leq a' \frac{n^s m^*}{1-m^*} + b' \frac{n^{s+c} m^*}{1-m^*}\\
n^s\Bigl|\sum_{x \in \mathcal{X}^*} Y(x) P(X=x) \Bigr|\leq n^s\sum_{x \in \mathcal{X}^*} (a+b n^c) m^*
&\leq a''n^s m^*+b'' (n^{s+c} m^*).
\end{align*}
Because $m^*=o(n^{-t})$ for any $t(>0)$, 
\[
\frac{n^sm^*}{1-m^*}, \frac{n^{s+c} m^*}{1-m^*}, n^s m^*, n^{s+c} m^* \to 0
\]
as $n\to \infty$, which shows \eqref{diff_cond_uncond}. The rest is obvious from the fact 
$D[\widehat{m}(X) : m ]$ is bounded (notice that $x\log x \to 0$ as $x \to 0$) and
\[
|(X_i - n m_i)^k| \leq \max (m_i^k, (1-m_i)^k) n^k,\quad k=1,2,\ldots.
\]
\end{proof}

\end{document}